\documentclass[a4paper,11pt]{amsart}
\usepackage{a4wide}
\usepackage[T1]{fontenc}
\usepackage{lmodern}
\usepackage{amsmath,amssymb,mathtools}
\usepackage{url}
\usepackage[colorlinks=true,linkcolor=blue,citecolor=blue,urlcolor=blue]{hyperref}

\newtheorem{maintheorem}{Theorem}

\newtheorem{lemma}{Lemma}[section]
\newtheorem{corollary}[lemma]{Corollary}
\theoremstyle{remark}
\newtheorem{remark}[lemma]{Remark}
\newcommand{\N}{\mathbb N}
\newcommand{\F}{\mathbb F}
\newcommand{\weakto}{\rightharpoonup}

\newcommand{\Rea}{\operatorname{Re}}
\DeclareMathOperator{\aconv}{aconv}

\title{Weakly LUR norms and the Schur property}
\author[S.~Draga]{Szymon Draga}
\address[S.~Draga]{Mathematical Institute\\Czech Academy of Sciences\\\v Zitn\'a 25\\115 67 Praha 1\\Czech Republic}
\email{szymon.draga@gmail.com}
\author[T.~Kania]{Tomasz Kania}
\address[T.~Kania]{Mathematical Institute\\Czech Academy of Sciences\\\v Zitn\'a 25 \\115 67 Praha 1\\Czech Republic  and  Institute of Mathematics and Computer Science\\ Jagiellonian University\\ {\L}ojasiewicza 6, 30-348 Krak\'{o}w, Poland}
\email{kania@math.cas.cz, tomasz.marcin.kania@gmail.com}
\thanks{RVO: 67985840.}
\date{}
\subjclass[2020]{46B03, 46B20}
\keywords{Equivalent norm, weak local uniform rotundity, midpoint local uniform rotundity, G\^ateaux smoothness, Schur property}

\begin{document}
\begin{abstract}
A separable real or complex Banach space fails the Schur property if and
only if it admits an equivalent G\^ateaux smooth, weakly locally uniformly
rotund norm which is not midpoint locally uniformly rotund. The construction
enlarges an LUR unit ball by a weakly compact set and adds a weighted
Hilbert-space term. The resulting norms can be chosen arbitrarily close to
any prescribed equivalent LUR norm, and are G\^ateaux smooth whenever the
prescribed norm is G\^ateaux smooth; Fr\'echet smoothness is preserved as
well. Weakly locally uniformly rotund norms which are not midpoint locally
uniformly rotund are dense among all equivalent norms on each separable
non-Schur space. For arbitrary real or complex Banach spaces, such a
renorming exists exactly when the space is LUR-renormable and fails the
Schur property. We give a direct construction for this last assertion.
\end{abstract}
\maketitle

\section{Introduction}

Local uniform rotundity turns asymptotic equality in the triangle
inequality into norm convergence. Its weak version requires only weak
convergence. The Schur property makes these two conclusions equivalent,
so a space with this property cannot carry a weakly LUR norm which fails
to be LUR. Since LUR implies midpoint local uniform rotundity, the same
obstruction applies to failure of the latter property. The converse asks
whether a weakly null sequence which is not
norm null can always be incorporated into the geometry of an equivalent
norm.

The first-named author \cite[Theorem~1]{Draga} proved the existence of an
equivalent wLUR norm which is not LUR on every infinite-dimensional Banach
space with separable dual, using a bounded shrinking Markushevich basis.
De Bernardi, Preti, and Somaglia \cite[Section~2.1]{DBPS} subsequently
recorded that the same construction already fails midpoint local uniform
rotundity and, under the shrinking hypothesis, is weakly uniformly rotund.
The extension argument in the first-named author's paper also
applies to LUR-renormable spaces containing an infinite-dimensional subspace
with separable dual. The construction below uses a weakly compact set in
place of the shrinking assumption and yields the following characterisation.

\begin{maintheorem}\label{thm:main}
Let $X$ be a separable real or complex Banach space. The following are
equivalent:
\begin{enumerate}
\item $X$ does not have the Schur property;
\item $X$ admits an equivalent weakly locally uniformly rotund norm
which is not locally uniformly rotund;
\item $X$ admits an equivalent G\^ateaux smooth, weakly locally uniformly
rotund norm which is not midpoint locally uniformly rotund.
\end{enumerate}
\end{maintheorem}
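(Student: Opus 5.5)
The plan is to prove the cycle (3)$\Rightarrow$(2)$\Rightarrow$(1)$\Rightarrow$(3).

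\emph{(3)$\Rightarrow$(2).} This is immediate. Every LUR norm is midpoint LUR, so a norm which fails midpoint LUR also fails LUR.

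\emph{(2)$\Rightarrow$(1).} This is the obstruction described in the introduction. Suppose $X$ has the Schur property and $\|\cdot\|$ is wLUR. Take $x$ and $(x_n)$ with $\|x_n\|\to\|x\|$ and $\|x+x_n\|\to 2\|x\|$. Weak local uniform rotundity gives $x_n\weakto x$. The Schur property then gives $\|x_n-x\|\to 0$, so the norm is LUR.

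\emph{(1)$\Rightarrow$(3).} Here the construction sketched in the abstract is carried out.

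\textbf{Step 1: a starting norm.} Since $X$ is separable, it carries an equivalent LUR and G\^ateaux smooth norm $|\cdot|$. Kadec--Klee--Troyanski together with the standard smoothing techniques provide one.

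\textbf{Step 2: the weakly compact set.} Failure of Schur gives a weakly null sequence $(e_n)$. Passing to a subsequence and normalising, we may take $|e_n|=1$, and by Bessaga--Pe\l czy\'nski we may also take $(e_n)$ to be a basic sequence. For a suitable sequence $\varepsilon_n\downarrow 0$ slowly, or with constant weights, set
\[
K=\{\pm e_n\}\cup\{0\}.
\]
This set is weakly compact. Let $B$ be the closed convex hull of $B_{|\cdot|}\cup \lambda K$ for small $\lambda>0$. By Krein--\v Smulian the hull of a weakly compact set stays weakly compact, which keeps $B$ closed and bounded. Let $p$ be its Minkowski functional.

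At the points $x_0=e_1$ and $x_n=e_n$ (suitably scaled), the midpoints $(x_0+x_n)/2$ lie on a flat part of the new sphere. Since $x_n-x_0$ does not tend to $0$ in norm, $p$ fails midpoint LUR, and also fails LUR. The face structure is the real issue, because flat faces threaten rotundity.

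\textbf{Step 3: restoring rotundity and smoothness.} Fix a bounded injective operator $T\colon X\to\ell_2$, which exists by separability; for instance $Tx=(2^{-k}f_k(x))_k$ with $(f_k)$ a weak$^*$-dense sequence in $B_{X^*}$. Define
\[
\|x\|^2=p(x)^2+|x|^2\cdot\delta+\sum_k w_k\,|f_k(x)|^2 .
\]
Choose the weights $w_k$ so that the Hilbert term vanishes asymptotically on the directions $e_n-e_1$. A weight such as $w_k=2^{-k}$ suffices, because $f_k(e_n)\to 0$. The sequence above then still witnesses failure of midpoint LUR.

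The added strictly convex Hilbert term forces wLUR. From
\[
2\|x\|^2+2\|y_n\|^2-\|x+y_n\|^2\to 0
\]
each convex summand's defect tends to $0$. This gives $f_k(y_n)\to f_k(x)$ for every $k$, and with boundedness this means weak convergence on a norming, weak$^*$-dense set. Hence $y_n\weakto x$, after verifying that bounded sequences are determined weakly by $(f_k)$. That holds if $(f_k)$ is chosen so that its closed span is $X^*$, or at least via a weak-compactness argument on $B$'s extreme directions.

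This is the main obstacle. Weak convergence against a countable total family is weaker than weak convergence when $X^*$ is nonseparable. I would try to handle it using the LUR term $\delta|\cdot|^2$ combined with the weak compactness of $K$. Concretely: $p$ differs from $|\cdot|$ only through the weakly compact part, so along any sequence where the $|\cdot|$-defect fails to vanish, one extracts a component in $\aconv K$. On that component weak convergence is automatic from weak compactness and the Hilbert term identifies the limit.

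G\^ateaux smoothness of $\|\cdot\|$ follows if $p$ is smoothed as well. To arrange this, replace $p^2$ by an infimal-convolution or average with the smooth $|\cdot|^2$, or note that a sum of squares in which one term is G\^ateaux smooth and the others convex yields a G\^ateaux smooth norm. That last claim fails in general, so I would instead define $p$ through the Asplund averaging of the $|\cdot|$-part. This preserves the flat segments along $e_n$ only approximately, so the weights must be tuned to keep the midpoint LUR defect bounded below.
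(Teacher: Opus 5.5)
Your arguments for (3)$\Rightarrow$(2) and (2)$\Rightarrow$(1) are correct and match the paper. The construction for (1)$\Rightarrow$(3), however, fails at a concrete point: the summand $\delta|x|^2$ makes your final norm LUR. Suppose $\|x_n\|=\|x\|=1$ and $\|x_n+x\|\to2$. Then the unit vectors $h_n=(p(x_n),\sqrt\delta\,|x_n|,(\sqrt{w_k}f_k(x_n))_k)$ converge to the corresponding vector $h$, and $\delta\bigl((|x_n|+|x|)^2-|x_n+x|^2\bigr)\leqslant\|h_n+h\|^2-\|x_n+x\|^2\to0$. Hence $|x_n+x|\to2|x|$, and LUR of $|\cdot|$ gives $x_n\to x$ in norm. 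This is the same argument the paper uses to show that $p_\tau$ is LUR in Corollary~\ref{cor:density}. An LUR norm is MLUR, so no choice of weights can help.

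You correctly identify the real obstacle in the wLUR step. A countable norming family need not detect weak convergence when $X^*$ is nonseparable, and then no countable family has closed span $X^*$ (e.g.\ $\ell_1$ or the Azimi--Hagler spaces). But the LUR of the starting norm has to be used \emph{inside} the unit ball, not as an added square. That is the missing idea, Lemma~\ref{lem:enlargement}, for $B_q=B_p+\eta K$:
write $x_n=b_n+\eta k_n$ and extract $k_n\weakto k\in K$ by weak compactness; put $b=x-\eta k$; coordinate convergence and the norming property give $p(b)\leqslant1$, and an interior-point argument gives $p(b)=1$; the norming family gives $\liminf_n p(b_n+b)\geqslant2$, so LUR gives $b_n\to b$ in norm; hence $x_n\weakto x$ along the subsequence, and the subsequence principle finishes.

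Your non-MLUR witness and your unit ball also have the wrong shape. MLUR concerns pairs whose midpoints converge in norm to a fixed point of the sphere, and $(e_1+e_n)/2$ does not converge in norm. Worse, a flat segment joining scaled copies of $e_1$ and $e_n$ gives $\|x_0+x_n\|\to2$ with $x_n\weakto0\neq x_0$, which is incompatible with wLUR. Correspondingly, the Hilbert term does not vanish asymptotically on $e_n-e_1$: $f_k(e_n-e_1)\to-f_k(e_1)$, not $0$.

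As for the ball, with $|e_n|=1$ and $\lambda\leqslant1$ you get $\overline{\mathrm{conv}}(B_{|\cdot|}\cup\lambda K)=B_{|\cdot|}$, so nothing changes. For $\lambda>1$ the hull of a union produces cone points at $\pm\lambda e_n$, losing G\^ateaux smoothness, and it does not place translates of $K$ on the sphere.

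The paper proceeds as follows. Fix $e$ and $f$ with $\|f\|_{p^*}=\langle f,e\rangle=1$, a normalised weakly null sequence $(z_n)\subset\ker f$, and $K=\overline{\aconv}\{z_n\}$, which is weakly compact. Take the Minkowski sum $B_p+\eta K$. Then $|\langle f,\cdot\rangle|\leqslant q$ and $e\pm\eta z_n\in B_p+\eta K$ give $q(e\pm\eta z_n)=1$ with midpoint exactly $e$, while $T(e\pm\eta z_n)\to Te$ by compactness of $T$. Every supporting functional of $B_q$ at $b+\eta k$ supports $B_p$ at $b$, so smoothness passes from $p$ to $q$ with no averaging. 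In the complex case the starting LUR, G\^ateaux smooth norm must also be made complex; the paper does this by averaging over rotations.
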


Theorem~\ref{thm:main} applies beyond the class covered by the earlier
extension result. Azimi and Hagler \cite[Theorem~1]{AzimiHagler} constructed separable
Banach spaces which fail the Schur property and are hereditarily
$\ell_1$, meaning that every infinite-dimensional closed subspace contains
a copy of $\ell_1$. No infinite-dimensional closed subspace $Y$ of such
a space can have separable dual. Indeed, if $Z\subset Y$ is isomorphic to
$\ell_1$, the Hahn--Banach theorem makes the restriction map
$Y^*\to Z^*\cong\ell_\infty$ surjective, which rules out separability of
$Y^*$.

For infinite-dimensional real spaces with separable dual, De Bernardi,
Preti, and Somaglia
\cite[Theorem~3.4]{DBPS} obtained an equivalent Fr\'echet smooth, weakly
uniformly rotund norm which is not midpoint locally uniformly rotund.
Their result for arbitrary infinite-dimensional separable real spaces
\cite[Theorem~3.3]{DBPS} gives G\^ateaux smoothness, rotundity, and failure
of midpoint local uniform rotundity, without weak local uniform rotundity.
Both results approximate any prescribed equivalent norm. In the
separable-dual setting, their combination of weak uniform rotundity and
Fr\'echet smoothness is stronger than the conclusion of
Theorem~\ref{thm:main}. Our contribution is the simultaneous smooth wLUR
and non-MLUR construction throughout the larger class of separable
non-Schur spaces, including the Azimi--Hagler examples.

Our construction also gives a quantitative approximation result.
\begin{maintheorem}\label{thm:approximation}
Let $X$ be a separable real or complex Banach space without the Schur
property, and let $p$ be an equivalent LUR norm on $X$. For every
$\eta,\delta>0$, there is an equivalent weakly LUR norm $N_{\eta,\delta}$
which is not midpoint locally uniformly rotund and satisfies
\[
 \frac{p(x)}{1+\eta}\leqslant N_{\eta,\delta}(x)
 \leqslant\sqrt{1+\delta^2}\,p(x)\qquad(x\in X).
\]
If $p$ is G\^ateaux smooth, respectively Fr\'echet smooth, then
$N_{\eta,\delta}$ can be chosen with the same smoothness property.
\end{maintheorem}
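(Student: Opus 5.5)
The construction follows the abstract: enlarge the unit ball of $p$ by a weakly compact set built from a weakly null sequence, then add a weighted Hilbert-space term.

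\emph{Setup.} Since $X$ fails the Schur property, there is a weakly null sequence $(x_n)$ with $p(x_n)=1$. We fix $\varepsilon>0$ small in terms of $\eta$. Let
\[
K=\overline{\aconv}\{\pm\varepsilon x_n,\ \text{scalar multiples } |\lambda|\leqslant\varepsilon\}\cup\{0\}.
\]
Because $\{x_n\}\cup\{0\}$ is weakly compact, the Krein--\v Smulian theorem makes $K$ weakly compact. Let $B=B_p+K$, which is closed, bounded, absolutely convex and contained in $(1+\varepsilon)B_p$, and let $q$ be its Minkowski functional. Then $p/(1+\varepsilon)\leqslant q\leqslant p$.

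Using separability, we pick a bounded injective operator $T\colon X\to\ell_2$, for instance $Tx=(2^{-k}f_k(x))_k$ with $(f_k)$ norming functionals in $B_{X^*}$ for $p$. We then define
\[
N(x)^2=q(x)^2+\delta^2\|Tx\|^2,
\]
normalising so that $\|T\|\leqslant1$. This gives $N\leqslant\sqrt{1+\delta^2}\,p$ and $N\geqslant q\geqslant p/(1+\eta)$ once $\varepsilon\leqslant\eta$.

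\emph{Failure of MLUR.} The idea is to arrange a point $x$ on the sphere of $q$ that is the midpoint of $x\pm\varepsilon x_n$, which nearly lie on the sphere. Choose $u$ with $p(u)=1$, and let $x=u+k$ with $k\in K$ chosen so that $x$ is a $q$-norming point whose supporting functional $f$ satisfies $f(x_n)\to0$. By weak nullity, $x\pm\varepsilon x_n$ have $q$-norm tending to $1$, while their differences do not go to $0$.

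The $\|T\cdot\|^2$ term is the delicate part. Since $Tx_n\to0$ in norm for compact $T$ (the diagonal-weight operator above is compact), parallelogram-type estimates show that $N(x\pm\varepsilon x_n)\to N(x)$. So $N$ is not MLUR.

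\emph{wLUR.} Suppose $N(x_k)\to N(x)$ and $N(x_k+x)\to2N(x)$. By convexity of squares, both $q(x_k)\to q(x)$ and $\|Tx_k-Tx\|\to0$. Hence $f(x_k)\to f(x)$ for each $f$ in the span of the $f_k$, which is weak$^*$ dense. Boundedness of $(x_k)$ then gives weak convergence once we know that every weak cluster behaviour is pinned down on a norming set.

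This is the main obstacle: the weak$^*$-dense span need not be norm dense in $X^*$. Here the weakly compact $K$ and LUR of $p$ must be used. I would write $x_k=u_k+k_k$ with $u_k\in q(x_k)B_p$ and $k_k\in q(x_k)K$. I would then use weak compactness of $K$ to pass to weakly convergent subsequences $k_k\weakto k$. Next I would show, from $q$-midpoint asymptotic equality and LUR of $p$, that $u_k\to u$ in norm. Finally, the $T$-convergence identifies every weak limit as $x$. This decomposition-and-LUR step is what I expect to be hardest, because the decomposition is non-unique.

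\emph{Smoothness.} If $p$ is G\^ateaux or Fr\'echet smooth, I would replace $q$ by the Minkowski functional of $B_p+K$. This set is a Minkowski sum with a smooth body, so each boundary point has a unique supporting functional, inherited from $B_p$. Fr\'echet smoothness would be checked via uniform convergence of the difference quotients on the $B_p$ summand. Adding the smooth Hilbert term preserves both properties.
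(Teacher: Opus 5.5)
\textbf{The gap is in the failure of MLUR.} Your claim that ``by weak nullity, $x\pm\varepsilon x_n$ have $q$-norm tending to $1$'' does not follow. Since $x_n\weakto0$, every functional satisfies $f(x_n)\to0$, so your extra condition on the supporting functional is vacuous. Weak nullity only gives the lower bound $\liminf_n q(x\pm\varepsilon x_n)\geqslant q(x)$, by weak lower semicontinuity. What you need is the upper bound $\limsup_n q(x\pm\varepsilon x_n)\leqslant q(x)$: the segments $[x-\varepsilon x_n,x+\varepsilon x_n]$ must essentially lie in a flat piece of $S_q$. For an arbitrary weakly null sequence there may be no such $x$.

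For example, take $X=\ell_2$, $p=\|\cdot\|_2$, $x_n=e_n$ and $K=\overline{\aconv}\{\varepsilon e_n\}$. Then $\|g\|_{q^*}=\|g\|_2+\varepsilon\|g\|_\infty$. Let $g_0$ norm some $x\in S_q$, and test $x+\varepsilon e_n$ against $g_0+te_n$ with small $t>0$ and large $n$. This gives
\[
 \liminf_n q(x+\varepsilon e_n)\geqslant\frac{1+\varepsilon t}{1+t^2/(2\|g_0\|_2)}>1
\]
for every $x\in S_q$. Since $Te_n\to0$, the same strict inequality holds for $N$. So no choice of $x=u+k$ produces your witnesses.

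\textbf{The missing idea} is to build the flat face into $K$ itself. Fix $e\in S_p$ and $f$ with $\|f\|_{p^*}=\langle f,e\rangle=1$. Replace the weakly null sequence by $w_n-\langle f,w_n\rangle e$ and renormalise to get $z_n$, so that $K=\overline{\aconv}\{\varepsilon z_n\}\subset\ker f$. Then $|\langle f,y\rangle|\leqslant q(y)$ for all $y$, while $e$ and $e\pm\varepsilon z_n$ lie in $B_p+K$. This gives the exact equalities $q(e)=q(e\pm\varepsilon z_n)=1$. Compactness of $T$ then yields $N(e\pm\varepsilon z_n)\to N(e)$ and $N(z_n)\geqslant1/(1+\varepsilon)$, and normalising gives the non-MLUR witnesses.

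\textbf{Your wLUR step} is only outlined, but it can be closed in either of two ways. First, decompose $x_n=b_n+k_n$ on the unit sphere, pass to a subsequence with $k_n\weakto k$, and define $b=x-k$ from this limit rather than fixing a decomposition of $x$ in advance. An interior-point argument then gives $p(b)=1$, and coordinate convergence with the norming property gives $\liminf_n p(b_n+b)\geqslant2$, so LUR applies. Alternatively, keep a fixed decomposition and use the radial estimate $q(b+k)\leqslant(p(b)+\varepsilon)/(1+\varepsilon)$. This turns $q(u_n+u)\to2$ into $p(b_n+b)\to2$.
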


In particular, these norms approximate $p$ uniformly on $B_p$ as
$\eta,\delta\to0$. A preliminary LUR perturbation gives density among
all equivalent norms; see Corollary~\ref{cor:density}. The norms in
Theorem~\ref{thm:approximation} have the form
\[
 N_{\eta,\delta}(x)^2=q_\eta(x)^2
       +\delta^2\sum_{j=1}^{\infty}2^{-j}|\langle f_j,x\rangle|^2.
\]
Here $(f_j)$ is norming for $p$, and $q_\eta$ is the Minkowski functional
of $B_p+\eta K$, where $K$ is the closed absolutely convex hull of a
normalised weakly null sequence. The weighted Hilbert-space term follows
the approach of the first-named author \cite{Draga}. The additional
ingredient is Lemma~\ref{lem:enlargement}: weak compactness of $K$ and
local uniform rotundity of $p$ turn coordinate convergence on the
$q_\eta$-unit sphere into weak convergence. No norm-density assumption on
the span of $(f_j)$ in $X^*$ is needed.

Molt\'o, Orihuela, Troyanski, and Valdivia
\cite[Main Theorem]{MOTV} proved that every wLUR-renormable space is
LUR-renormable. Together with our construction, this gives the following
complete formulation without a separability assumption.
\begin{maintheorem}\label{thm:general}
For a real or complex Banach space $X$, the following are equivalent:
\begin{enumerate}
\item $X$ admits an equivalent weakly LUR norm which is not MLUR;
\item $X$ admits an equivalent LUR norm and fails the Schur property.
\end{enumerate}
\end{maintheorem}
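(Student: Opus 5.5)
(1)$\Rightarrow$(2): if $X$ has a weakly LUR norm $\|\cdot\|$ that is not MLUR, then by the theorem of Molt\'o, Orihuela, Troyanski, and Valdivia \cite[Main Theorem]{MOTV} $X$ is LUR-renormable. It remains to see that $X$ fails the Schur property. Suppose $X$ is Schur. Take $x,x_n$ with $\|x\|=\|x_n\|=1$ and $\|x+x_n\|\to 2$. Weak LUR gives $x_n\weakto x$, and the Schur property upgrades this to $x_n\to x$ in norm. So the norm is LUR, hence MLUR (LUR implies MLUR), a contradiction. This direction is formal.

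(2)$\Rightarrow$(1): the plan is to repeat the construction behind Theorem~\ref{thm:approximation} without separability. Let $p$ be an equivalent LUR norm. Failure of Schur gives a weakly null sequence $(u_k)$ with $p(u_k)=1$. Put $K=\overline{\aconv}\{u_k\}$, which is weakly compact by Krein--\v Smulian, and let $q$ be the Minkowski functional of $B_p+K$. Separability of $X$ was used only to choose a countable $p$-norming family $(f_j)$ in the Hilbert term. In general we instead work in the separable subspace $Y=\overline{\operatorname{span}}\{u_k\}$.

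The fix is to replace the sum $\sum 2^{-j}|\langle f_j,x\rangle|^2$ by a term whose coordinates norm $p$ only on $Y$. Choose $f_j\in B_{X^*}$ whose restrictions are $p$-norming on $Y$ (Hahn--Banach), and set
\[
N(x)^2=q(x)^2+\sum_{j}2^{-j}|\langle f_j,x\rangle|^2 .
\]
The proof that $N$ is not MLUR is unchanged, since the witnessing points come from $K$ and lie in $Y$ (e.g.\ the midpoint defect along $\pm u_k$ directions).

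\textbf{Main obstacle: weak LUR.} Take $N(x_n)=N(x)=1$ with $N(x+x_n)\to2$. The standard convexity argument gives
\begin{itemize}
\item[(a)] $q(x_n)\to q(x)$ and $q(x+x_n)\to 2q(x)$;
\item[(b)] $\langle f_j,x_n\rangle\to\langle f_j,x\rangle$ for every $j$.
\end{itemize}
Lemma~\ref{lem:enlargement} is then meant to convert coordinate convergence into weak convergence. The expected mechanism: write $x_n=b_n+k_n$ and $x=b+k$ with $b_n,b\in B_p$ and $k_n,k\in K$ (suitably scaled). LUR of $p$ forces $b_n\to b$ in norm. Weak compactness of $K$ then lets us pass to weakly convergent subsequences $k_n\weakto k'\in K\subset Y$. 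Coordinates separate points of $Y$, so (b) forces $k'=k$, and since every subsequence has a further subsequence converging weakly to $k$, we get $x_n\weakto x$.

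I expect this is exactly how the lemma works, so it transfers verbatim, because only separation of points of $Y$ (where $K$ lives) is needed. One caveat: if uniqueness of the decomposition requires separating points of a larger set, enlarge $Y$ to contain the relevant $b$'s. The decomposition should be invoked after fixing $x$, by adding $x$ to the generating set. Alternatively, one could reduce to the separable case via a projectional skeleton. The direct route above avoids that.
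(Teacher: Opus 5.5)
\textbf{Main gap: the non-MLUR claim in (2)$\Rightarrow$(1).} You take $K$ to be the hull of an arbitrary normalised weakly null sequence $(u_k)$. The paper's witness needs more: $K$ must lie in the kernel of a functional supporting $B_p$. One fixes $e\in S_p$ and $f\in S_{p^*}$ with $\langle f,e\rangle=1$, and replaces $w_n$ by $z_n=v_n/p(v_n)$, where $v_n=w_n-\langle f,w_n\rangle e$. Then $|\langle f,y\rangle|\leqslant q(y)$, the face of $B_q$ exposed by $f$ contains $e+K$, and $q(e)=q(e\pm z_n)=1$. Combined with $Tz_n\to0$, this gives the witnesses $(e\pm z_n)/N(e\pm z_n)$, whose midpoints tend to $e/N(e)$.

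The centre $e$ lies outside $Y\subset\ker f$. So your statement that ``the witnessing points come from $K$ and lie in $Y$'' misses the mechanism, and without $K\subset\ker f$ the conclusion can fail. Take $X=\ell_2$, $p=\|\cdot\|_2$ and $u_k=e_k$. Then $K=B_{\ell_1}$ and the dual norm of $q$ is $\|g\|_2+\|g\|_\infty$. Identify $\ell_2^*$ with $\ell_2$, let $g_0$ be a $q^*$-unit functional with $\langle g_0,x\rangle=q(x)$, and test against $g_0+td_n$ for small $t>0$. This gives $\liminf_n q(x+d_n)>q(x)$ for every $x\neq0$ and every weakly null $(d_n)$ with $\inf_n\|d_n\|_2>0$. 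Now suppose $N$ failed MLUR at $x$. The Hilbert-space argument gives $q(x_n^\pm)\to q(x)$ and $T(x_n^+-x_n^-)\to0$. Since $T$ is injective on $Y=X$, a subsequence of $d_n=(x_n^+-x_n^-)/2$ would be weakly null, bounded below in norm, and satisfy $q(x+d_n)\to q(x)$. This is a contradiction, so your $N$ is MLUR in this example.

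\textbf{Secondary gaps: weak LUR, and the complex case of (1)$\Rightarrow$(2).} Your weak LUR sketch has the right skeleton, but the step ``LUR of $p$ forces $b_n\to b$'' is unjustified. Also, Lemma~\ref{lem:enlargement} does not transfer verbatim. Its proof sets $b=x-\eta k$ for a weak cluster point $k$ of $(k_n)$, and uses that $(f_j)$ norms $p$ on all of $X$ to get $p(b)\leqslant1$ and $\liminf_n p(b_n+b)\geqslant2p(b)$; here $b\notin Y$ in general. The paper proves Lemma~\ref{lem:restricted-separation} instead. It decomposes both $u_n=b_n+\eta k_n$ and $u=b+\eta k$ exactly and uses the radial estimate $q(b+\eta k)\leqslant(p(b)+\eta)/(1+\eta)$. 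Applied to $u$ and to $(u_n+u)/2$, this forces $p(b)=1$ and $p(b_n+b)\to2$. Only then does LUR give $b_n\to b$, hence $Tk_n\to Tk$, and injectivity of $T$ on $Y$ identifies the weak cluster points of $(k_n)$. Your fallback of enlarging $Y$ after fixing $x$ is not admissible, since $T$ and $N$ must be fixed before wLUR is tested at every $x$. Finally, in (1)$\Rightarrow$(2) for complex $X$, the Molt\'o--Orihuela--Troyanski--Valdivia theorem gives an LUR norm only on the underlying real space. You need Lemma~\ref{lem:rotation} to turn it into an equivalent complex LUR norm.
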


The sufficiency also follows from Theorem~\ref{thm:main} and the wLUR
extension assertion recorded by the first-named author
\cite[Corollary~6 and Remark~7]{Draga}, in connection with
\cite[Theorem~1.1 and Remark~1.1]{Tang}. We instead prove it directly.
The estimate in Lemma~\ref{lem:restricted-separation} allows the Hilbert-space
operator to separate points only in the separable closed span of $K$.
Thus no norm-extension assertion is needed in our proof.

\section{Preliminaries}

We write $\F$ for $\mathbb R$ or $\mathbb C$. All norms on $X$ considered
below are equivalent to its given Banach-space norm. The pairing
$\langle x^*,x\rangle$ denotes evaluation of $x^*\in X^*$ at $x\in X$.
For an equivalent norm $p$, let
$B_p=\{x:p(x)\leqslant1\}$ and $S_p=\{x:p(x)=1\}$.
All rotundity notions over the complex field are understood on the
underlying real space.
The norm $p$ is \emph{locally uniformly rotund} if
\[
 x_n,x\in S_p,\qquad p(x_n+x)\longrightarrow2
 \quad\Longrightarrow\quad p(x_n-x)\longrightarrow0.
\]
It is \emph{weakly locally uniformly rotund} (wLUR) if the same hypotheses
imply $x_n\weakto x$. It is \emph{weakly uniformly rotund} (WUR) if
$x_n,y_n\in S_p$ and $p(x_n+y_n)\to2$ imply
$x_n-y_n\weakto0$. Thus WUR implies wLUR.
It is \emph{midpoint locally uniformly rotund}
(MLUR) if, whenever $x,x_n^+,x_n^-\in S_p$ and
\[
 p\!\left(\frac{x_n^++x_n^-}{2}-x\right)\longrightarrow0,
\]
we have $p(x_n^+-x_n^-)\to0$.
Every LUR norm is MLUR. Indeed, a supporting functional at $x$ takes real
parts tending to one on both endpoint sequences, so
$p(x_n^\pm+x)\to2$; local uniform rotundity gives
$p(x_n^\pm-x)\to0$.

A norm is \emph{G\^ateaux smooth} if it is G\^ateaux differentiable at
every non-zero point, with differentiation over real parameters in the
complex case. Equivalently, for each $x\in S_p$ the set
\[
 D_p(x)=\{x^*\in X^*: \|x^*\|_{p^*}=1,\ \langle x^*,x\rangle=1\}
\]
is a singleton; see \cite{DGZ}.
A norm is \emph{Fr\'echet smooth} if it is Fr\'echet differentiable at
every non-zero point. We use \v Smulyan's criterion in the following
form: $p$ is Fr\'echet differentiable at $x\in S_p$ if and only if every
sequence $(g_n)\subset S_{p^*}$ with
$\Rea\langle g_n,x\rangle\to1$ converges in dual norm to the unique
element of $D_p(x)$; see \cite{DGZ}.
A Banach space has the \emph{Schur property} if every
weakly convergent sequence converges in norm. Equivalent norms have the
same continuous linear functionals and hence the same weak convergence.

We use the classical simultaneous renorming theorem: every separable real
Banach space admits an equivalent norm which is both LUR and G\^ateaux
smooth. This follows from the LUR renorming theorem of Kadec
\cite{Kadec} and the simultaneous renorming method of Asplund
\cite{Asplund}; see also \cite{DGZ} and
\cite[Theorem~3.1]{DBS}. The following observation supplies the complex
case and will also be used in the proof of Theorem~\ref{thm:general}.

\begin{lemma}\label{lem:rotation}
Let $X$ be a complex Banach space and let $r$ be an equivalent norm on
its underlying real space. Define
\begin{equation}\label{eq:rotation}
 p(x)^2=\int_0^{2\pi}r(e^{it}x)^2\,\frac{dt}{2\pi}.
\end{equation}
Then $p$ is an equivalent complex norm. If $r$ is LUR, respectively
wLUR, then $p$ has the same property. If $r$ is G\^ateaux smooth, then
$p$ is G\^ateaux smooth. If $Y$ is a complex subspace and $r|_Y$ is a
complex norm, then $p|_Y=r|_Y$.
\end{lemma}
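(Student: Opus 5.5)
The plan is to regard $p$ as the $L^2$-norm of the map $x\mapsto r(e^{it}x)$ in $L^2([0,2\pi],dt/2\pi)$, and then to transfer each property of $r$ through this averaging.

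\emph{Equivalent complex norm.} Since $p$ is the composition of the real-linear map $x\mapsto(e^{it}x)_t$ with the $L^2$-norm of the pointwise seminorm $r$, it is subadditive by Minkowski's inequality in $L^2$ and real homogeneous. Complex homogeneity comes from translation invariance of $dt$: $p(e^{is}x)=p(x)$. Equivalence follows from $c\|x\|\leqslant r(e^{it}x)\leqslant C\|x\|$, because $\|e^{it}x\|=\|x\|$.

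\emph{Subspace claim.} If $r|_Y$ is a complex norm, then $r(e^{it}y)=r(y)$ for $y\in Y$, so the integrand is constant and $p|_Y=r|_Y$.

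\emph{Transfer of rotundity.} Let $x_n,x\in S_p$ with $p(x_n+x)\to2$. The standard parallelogram argument gives
\[
2p(x_n)^2+2p(x)^2-p(x_n+x)^2=\int\bigl[2r(e^{it}x_n)^2+2r(e^{it}x)^2-r(e^{it}(x_n+x))^2\bigr]\frac{dt}{2\pi}\to0.
\]
The integrand is nonnegative and dominates $(r(e^{it}x_n)-r(e^{it}x))^2$. Hence $\phi_n(t)=r(e^{it}x_n)\to\phi(t)=r(e^{it}x)$ in $L^2$, and the integrand tends to $0$ in $L^1$. Passing to a subsequence, both convergences hold for almost every $t$. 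At such a $t$ we have $r(e^{it}x_n)\to r(e^{it}x)>0$ and
\[
r\bigl(e^{it}x_n+e^{it}x\bigr)\to2r(e^{it}x).
\]
After normalising, LUR (resp.\ wLUR) of $r$ at the point $e^{it}x/r(e^{it}x)$ gives $e^{it}x_n\to e^{it}x$ in norm (resp.\ weakly). Multiplying by $e^{-it}$ yields $x_n\to x$ (resp.\ weakly); weak convergence in the real and complex senses coincide because $\Rea$ of a complex functional is a real functional and conversely. A subsequence-of-subsequence argument upgrades this to convergence of the full sequence. The main obstacle is this step: extracting one good $t$ from $L^1$-convergence, and normalising carefully, since LUR is stated on the sphere.

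\emph{Gâteaux smoothness.} For fixed $x\neq0$ and $h$, the map $\lambda\mapsto r(e^{it}(x+\lambda h))^2$ is differentiable at $0$ for each $t$, with derivative $2r(e^{it}x)\langle r'(e^{it}x),e^{it}h\rangle$. The difference quotients are bounded by $C\|h\|(\|x\|+\|h\|)$, using convexity and the Lipschitz property of $r$. Dominated convergence then gives that $p^2$ is Gâteaux differentiable at $x$, and so $p$ is Gâteaux differentiable at $x$ because $p(x)>0$. Here differentiation is over real parameters, as in the paper's convention.
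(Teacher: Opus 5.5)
Your proposal is correct and follows the paper's proof essentially step for step. You use Minkowski's inequality and translation invariance for the norm axioms. For rotundity, you combine the paper's two nonnegative quantities into a single integrand equal to $(a_n-a)^2+\bigl((a_n+a)^2-r(e^{it}(x_n+x))^2\bigr)$, then pick an a.e.\ good $t$, normalise, apply LUR/wLUR of $r$, rotate back and use the subsequence principle. For G\^ateaux smoothness you use dominated convergence with the same kind of bound. The one point worth stating explicitly is that the resulting derivative $h\mapsto\int 2r(e^{it}x)\langle r'(e^{it}x),e^{it}h\rangle\,\frac{dt}{2\pi}$ is real-linear and bounded in $h$, which is immediate from your formula.
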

\begin{proof}
The triangle inequality follows from Minkowski's integral inequality,
and invariance under multiplication by complex scalars of modulus one
follows by translation of the integration variable. The other norm
axioms, equivalence, and the assertion about restriction are immediate.

Suppose that $r$ is LUR or wLUR, that $p(x_n)=p(x)=1$, and that
$p(x_n+x)\to2$. Put
$a_n(t)=r(e^{it}x_n)$ and $a(t)=r(e^{it}x)$. The inequalities
\[
 p(x_n+x)\leqslant\|a_n+a\|_{L_2}\leqslant2
\]
and the parallelogram identity give $a_n\to a$ in $L_2$. Also the
non-negative functions
\[
 (a_n+a)^2-r(e^{it}(x_n+x))^2
\]
have integrals tending to zero. From any subsequence we may therefore
extract one on which both convergences hold pointwise almost everywhere.
At any such $t$, we have
$r(e^{it}x_n)\to r(e^{it}x)>0$ and
$r(e^{it}(x_n+x))\to2r(e^{it}x)$.
Normalising and applying the corresponding property of $r$ gives
$e^{it}x_n\to e^{it}x$ in norm, respectively weakly. Multiplication by
$e^{-it}$ is a bounded real-linear operator, and the real and complex
weak topologies coincide. The subsequence principle therefore gives the
required convergence of $x_n$ to $x$.

Suppose now that $r$ is G\^ateaux smooth. Fix $x\neq0$ and $h\in X$.
For real $s$ with $0<|s|\leqslant1$, the absolute value of the difference
quotient of $r(e^{it}(x+sh))^2$ at $s=0$ is bounded by
\[
 \bigl(2r(e^{it}x)+r(e^{it}h)\bigr)r(e^{it}h).
\]
This expression is bounded uniformly in $t$. Pointwise derivatives exist
and are real-linear in $h$, so dominated convergence differentiates
\eqref{eq:rotation} under the integral. The resulting derivative is
bounded and real-linear. Thus $p^2$, and hence $p$ away from zero, is
G\^ateaux differentiable.
\end{proof}

We shall also use the following elementary consequence of the definition.
If $p$ is LUR, $p(y)=1$, $p(y_n)\leqslant1$, and
$p(y_n+y)\to2$, then $p(y_n-y)\to0$. Indeed,
$p(y_n)\to1$ by the triangle inequality, after which one applies the
definition to $y_n/p(y_n)$.

If $(X,p)$ is separable and non-zero, there is a sequence
$(f_j)_{j\in\N}\subset X^*$ such that
\begin{equation}\label{eq:norming}
 \|f_j\|_{p^*}=1\quad(j\in\N),
 \qquad p(x)=\sup_{j\in\N}|\langle f_j,x\rangle|\quad(x\in X).
\end{equation}
Indeed, choose a dense sequence $(d_j)$ in $S_p$ and, by the Hahn--Banach theorem,
choose $f_j$ with $\|f_j\|_{p^*}=1$ and $\langle f_j,d_j\rangle=1$.
For $x\in S_p$, the inequality
$|\langle f_j,x\rangle|\geqslant1-p(x-d_j)$ and density give
$\sup_j|\langle f_j,x\rangle|=1$. Homogeneity proves \eqref{eq:norming}.

\begin{lemma}\label{lem:compact}
Suppose that $z_n\weakto0$ and $p(z_n)\leqslant1$. Then
\[
 K=\left\{\sum_{n=1}^{\infty}a_nz_n:
               a\in\ell_1(\F),\ \sum_{n=1}^{\infty}|a_n|\leqslant1\right\}
\]
is weakly compact and equals
$\overline{\aconv}\{z_n:n\in\N\}$, where the closure is in norm.
In particular, $K\subset B_p$.
\end{lemma}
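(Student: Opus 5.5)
The plan is to view $K$ as the image of the closed unit ball of $\ell_1(\F)$ under the operator $T\colon\ell_1\to X$, $Ta=\sum_n a_nz_n$, and to obtain weak compactness from a compact topology on $B_{\ell_1}$ that $T$ maps continuously into the weak topology. First, $T$ is well defined and bounded, since $p(z_n)\leqslant 1$ makes the series absolutely convergent with $p(Ta)\leqslant\|a\|_1$. This gives $K=T(B_{\ell_1})\subset B_p$ at once.

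For weak compactness I will regard $B_{\ell_1}=B_{c_0^*}$ as carrying the weak$^*$ topology $\sigma(\ell_1,c_0)$. By Banach--Alaoglu this is compact, and because $c_0$ is separable it is metrisable, so sequential continuity suffices. The key step is that $T$ is weak$^*$-to-weak continuous on $B_{\ell_1}$. Fix $x^*\in X^*$ and put $b_n=\langle x^*,z_n\rangle$. Since $z_n\weakto0$, the sequence $b$ lies in $c_0$, and $\langle x^*,Ta\rangle=\sum_n a_nb_n=\langle a,b\rangle$, which is a weak$^*$-continuous function of $a$. (Equivalently, $T$ is the adjoint of the map $x^*\mapsto(\langle x^*,z_n\rangle)_n$ from $X^*$ into $c_0$, restricted to $\ell_1$.) So $K$ is the continuous image of a compact set and is therefore weakly compact. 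I expect this identification, using $z_n\weakto0$ to land in $c_0$ rather than $\ell_\infty$, to be the only real point of the proof.

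For $K=\overline{\aconv}\{z_n\}$, I argue both inclusions. Finitely supported $a$ with $\|a\|_1\leqslant1$ give exactly $\aconv\{z_n\}$. Moreover, for every $a\in B_{\ell_1}$, its truncations $a^{(N)}$ satisfy $p(Ta-Ta^{(N)})\leqslant\sum_{n>N}|a_n|\to0$. Hence $K\subset\overline{\aconv}\{z_n\}$ and $\aconv\{z_n\}\subset K$. Since $K$ is weakly compact, it is weakly closed and so norm closed, which gives the reverse inclusion. The set $K$ is also clearly absolutely convex, because $T$ is linear and $B_{\ell_1}$ is absolutely convex, although this is not needed once closedness is established.
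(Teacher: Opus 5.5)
Your proposal is correct and matches the paper's proof: you use the same operator $a\mapsto\sum_n a_nz_n$, the same $\sigma(\ell_1,c_0)$-to-weak continuity coming from $(\langle x^*,z_n\rangle)_n\in c_0$, Banach--Alaoglu, and the same truncation argument combined with norm closedness for the identification with $\overline{\aconv}\{z_n:n\in\N\}$. The metrisability remark is harmless but unnecessary, since you in fact prove full weak$^*$-to-weak continuity directly.
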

\begin{proof}
The series defining $K$ converge absolutely in $X$. Define
$A:\ell_1(\F)\to X$ by $Aa=\sum_n a_nz_n$. For every $g\in X^*$,
the sequence $(\langle g,z_n\rangle)$ belongs to $c_0(\F)$, and
\[
 \langle g,Aa\rangle=\sum_{n=1}^{\infty}a_n\langle g,z_n\rangle.
\]
Thus $A$ is continuous from $\sigma(\ell_1,c_0)$ to $\sigma(X,X^*)$.
The Banach--Alaoglu theorem makes $B_{\ell_1}$ weak-star compact, so
$K=A(B_{\ell_1})$ is weakly compact. It is absolutely convex, contains
every $z_n$, and is norm closed. Conversely, truncating the defining
series approximates every element of $K$ in norm by elements of
$\aconv\{z_n:n\in\N\}$. Finally, $p(Aa)\leqslant\|a\|_1$.
\end{proof}

\section{A weakly compact enlargement of an LUR ball}

\begin{lemma}\label{lem:enlargement}
Let $p$ be an equivalent LUR norm on $X$, let $(f_j)$ satisfy
\eqref{eq:norming}, let $K\subset B_p$ be non-empty, absolutely
convex and weakly compact, and let $\eta>0$. The set
\[
 B=B_p+\eta K
\]
is the closed unit ball of an equivalent norm $q$, and
\begin{equation}\label{eq:equiv}
 \frac{p(x)}{1+\eta}\leqslant q(x)\leqslant p(x)\qquad(x\in X).
\end{equation}
Moreover, if
\begin{equation}\label{eq:coordinates}
 q(x_n)\longrightarrow q(x),\qquad
 \langle f_j,x_n\rangle\longrightarrow\langle f_j,x\rangle\quad(j\in\N),
\end{equation}
then $x_n\weakto x$. If $p$ is G\^ateaux smooth, respectively
Fr\'echet smooth, then $q$ has the same smoothness property.
\end{lemma}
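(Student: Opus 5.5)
First I show that $B$ is the unit ball of a norm $q$ satisfying \eqref{eq:equiv}. The set $B$ is absolutely convex, since both summands are. It is bounded, because $K\subset B_p$ gives $B\subset(1+\eta)B_p$. It contains $B_p$, so it is absorbing. For closedness, $B_p$ is weakly closed and $\eta K$ is weakly compact, so the sum is weakly closed and therefore norm closed. Its Minkowski functional $q$ is thus an equivalent norm with $B_q=B$. The inclusions $B_p\subset B\subset(1+\eta)B_p$ give \eqref{eq:equiv}. I record two further facts from weak compactness of $K$. Every $x$ with $q(x)=1$ lies in $B$, so it can be written as $x=u+\eta k$ with $p(u)\leqslant1$ and $k\in K$. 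The dual norm is
\[
q^*(g)=\|g\|_{p^*}+\eta\sup_{k\in K}|\langle g,k\rangle|.
\]

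Next comes the convergence claim. Assume \eqref{eq:coordinates}. Since weak limits are unique, it suffices to show that every subsequence of $(x_n)$ has a further subsequence converging weakly to $x$. Rescaling, I may assume $q(x)=1$ and $q(x_n)=1$; the case $x=0$ is immediate, because $(x_n)$ is then norm null. Write $x_n=u_n+\eta k_n$ with $p(u_n)\leqslant1$ and $k_n\in K$. By Eberlein--\v Smulian, pass to a subsequence with $k_n\weakto k\in K$. The functionals $f_j$ are uniformly bounded, so $\langle f_j,u_n\rangle\to\langle f_j,x-\eta k\rangle=:\langle f_j,u\rangle$. Because $(f_j)$ is norming, $p(u)\leqslant\liminf p(u_n)\leqslant1$ via $|\langle f_j,u\rangle|=\lim|\langle f_j,u_n\rangle|$. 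Hence $x=u+\eta k$ is a decomposition of $x$ in $B$.

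The key step is to upgrade the coordinatewise convergence $u_n\to u$ to weak, indeed norm, convergence using LUR. Choose $g\in X^*$ with $q^*(g)=1$ and $\Rea\langle g,x\rangle=1$. Then
\[
1=\Rea\langle g,u\rangle+\eta\Rea\langle g,k\rangle\leqslant\|g\|_{p^*}+\eta\sup_K|g|=1,
\]
which forces $\Rea\langle g,u\rangle=\|g\|_{p^*}$ and $p(u)=1$ whenever $g\neq0$ on $u$. The difficulty is that $\Rea\langle g,u_n\rangle$ need not converge to $\|g\|_{p^*}$ directly, because $g$ is not among the $f_j$. Instead I would use the $f_j$ themselves. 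Pick $j$ with $|\langle f_j,u\rangle|$ close to $p(u)$; then
\[
p\Bigl(\tfrac{u_n+u}{2}\Bigr)\geqslant\bigl|\langle f_j,\tfrac{u_n+u}{2}\rangle\bigr|\to|\langle f_j,u\rangle|.
\]
So $\liminf p((u_n+u)/2)\geqslant p(u)$, while $p(u_n)\leqslant1$. If $p(u)=1$, the elementary LUR consequence stated earlier yields $p(u_n-u)\to0$. Then $\eta k_n=x_n-u_n\weakto x-u=\eta k$, so $x_n\weakto x$.

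The main obstacle is the case $p(u)<1$, where the decomposition need not be extremal in the $p$-part. In that case I would first re-choose the decompositions so that $p(u_n)$ is minimal over all decompositions, which is possible by weak compactness. I would then argue that $\limsup p(u_n)\leqslant p(u)$ fails only if $u$ could be shortened, contradicting $q(x)=1$. Alternatively, if $u_n\to u$ in norm can be arranged along $\liminf p(u_n)=p(u)$, the same LUR argument applies after normalising by $p(u)$ (when $u\neq0$; when $u=0$, norm convergence $p(u_n)\to0$ is what is needed). This minimality argument is where I expect the real work.

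For smoothness, note that $q^*$ is the sum of $p^*$ and $\eta$ times the support function of $K$. A functional $g\in D_q(x)$ with $x=u+\eta k$ must norm $u$ for $p$ and attain its supremum over $K$ at $k$. I would show that the scalar $u$ is determined up to this data; when $u\neq0$, smoothness of $p$ at $u$ pins down $g/\|g\|_{p^*}$, and I would check that $\|g\|_{p^*}$ is fixed by $\langle g,x\rangle=1$. For Fr\'echet smoothness I would run \v Smulyan's criterion in the same way, using the norm convergence of $u_n$ obtained above.
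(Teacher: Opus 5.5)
\textbf{Verdict: there is a genuine gap, but a small one, and your own computation closes it.}

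Your weak-convergence argument follows the paper's route: an Eberlein--\v Smulian subsequence with $k_n\weakto k$, then $u=x-\eta k$, then the norming sequence giving $p(u)\leqslant1$ and $\liminf_n p(u_n+u)\geqslant2p(u)$, then LUR. Your closedness argument (weakly closed plus weakly compact is weakly closed) is a valid substitute for the paper's sequential one.

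The gap is the case $p(u)<1$. You call it the main obstacle and propose an unexecuted minimality argument, and you give no reason why $\limsup_n p(u_n)\leqslant p(u)$ should hold. As written, the proof is therefore incomplete at this point.

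The missing observation is that this case never occurs: every decomposition $x=u+\eta k$ of a point with $q(x)=1$ has $p(u)=1$.
\begin{itemize}
\item The paper proves this by an interior-point argument. If $p(u)<1$, then $x+(1-p(u))\{h:p(h)<1\}\subset B_p+\eta K=B$. So $x$ is interior to $B$ and $q(x)<1$.
\item Your supporting-functional computation already gives the same conclusion. From $q^*(g)=1$ we get $g\neq0$, hence $\|g\|_{p^*}>0$. Then $\|g\|_{p^*}=\Rea\langle g,u\rangle\leqslant\|g\|_{p^*}p(u)$ yields $p(u)\geqslant1$.
\end{itemize}
So your qualifier ``whenever $g\neq0$ on $u$'' is vacuous, and the minimality detour (and its ``alternatively'' variant) can be dropped. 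With $p(u)=1$, your LUR step gives $p(u_n-u)\to0$, hence $x_n=u_n+\eta k_n\weakto u+\eta k=x$. Note that your sentence ``$\eta k_n=x_n-u_n\weakto x-u$'' presupposes the conclusion; use the already-arranged $k_n\weakto k$ instead.

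The same fact removes the caveat ``when $u\neq0$'' in your G\^ateaux argument, which otherwise agrees with the paper. Fix one decomposition; every $g\in D_q(x)$ is then a positive multiple of the unique $g_u\in D_p(u)$, and the normalisation fixes the multiple.

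The Fr\'echet part, however, is not actually argued. The norm convergence of the $u_n$ from the weak-convergence step is irrelevant to \v Smulyan's criterion, which concerns sequences of functionals. For $g_n\in S_{q^*}$ with $\Rea\langle g_n,x\rangle\to1$, what is needed is the splitting
$1-\Rea\langle g_n,x\rangle=(\|g_n\|_{p^*}-\Rea\langle g_n,u\rangle)+\eta(\sup_{z\in K}\Rea\langle g_n,z\rangle-\Rea\langle g_n,k\rangle)$.
This comes from your dual-norm formula, and both brackets are non-negative. The argument then runs as follows.
\begin{itemize}
\item Put $a_n=\|g_n\|_{p^*}\in[(1+\eta)^{-1},1]$. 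The splitting gives $\Rea\langle g_n/a_n,u\rangle\to1$.
\item \v Smulyan's criterion for $p$ at $u$ gives $g_n/a_n\to g_u$ in norm.
\item Then $1/a_n=q^*(g_n/a_n)\to q^*(g_u)$, so $g_n\to g_u/q^*(g_u)$, which is Fr\'echet smoothness of $q$ by \v Smulyan again.
\end{itemize}
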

\begin{proof}
The set $B$ is absolutely convex and
$B_p\subset B\subset(1+\eta)B_p$. It is norm closed. Indeed, suppose
$b_n+\eta k_n\to v$ in norm, with $b_n\in B_p$ and $k_n\in K$.
Weak compactness and the Eberlein--\v Smulian theorem give a subsequence
such that $k_n\weakto k\in K$. Along it,
$b_n\weakto v-\eta k$, so $v-\eta k\in B_p$ because $B_p$ is weakly closed.
Hence $v\in B$. Its Minkowski functional
$q(x)=\inf\{t>0:x\in tB\}$ is therefore an equivalent norm with closed
unit ball $B$, and the
inclusions imply \eqref{eq:equiv}.

We next prove the assertion about weak convergence when $q(x_n)=q(x)=1$.
Write
\[
 x_n=b_n+\eta k_n,\qquad b_n\in B_p,\quad k_n\in K.
\]
Start with an arbitrary subsequence. Pass to a further subsequence,
without changing notation, such that $k_n\weakto k\in K$, and put
$b=x-\eta k$. For every $j$,
$\langle f_j,b_n\rangle\to\langle f_j,b\rangle$.
Since $p(b_n)\leqslant1$, the norming property yields $p(b)\leqslant1$.
In fact $p(b)=1$. If $p(b)<1$, then
\[
 x+(1-p(b))\{h:p(h)<1\}\subset B_p+\eta K=B,
\]
so $x$ is an interior point of $B$, contrary to $q(x)=1$.

For each fixed $j$ we have
\[
 \liminf_n p(b_n+b)
 \geqslant\lim_n|\langle f_j,b_n+b\rangle|=2|\langle f_j,b\rangle|.
\]
Taking the supremum over $j$ gives
$\liminf_n p(b_n+b)\geqslant2p(b)=2$.
The reverse upper bound follows from $p(b_n)\leqslant1$.
Local uniform rotundity of $p$ now gives $p(b_n-b)\to0$, and consequently
\[
 x_n=b_n+\eta k_n\weakto b+\eta k=x
\]
along the extracted subsequence. Every subsequence has such a further
subsequence, which proves weak convergence of the original sequence.
Explicitly, failure of weak convergence would give a functional $g$, an
$\varepsilon>0$, and a subsequence with
$|\langle g,x_n-x\rangle|\geqslant\varepsilon$, a contradiction.

Finally, under \eqref{eq:coordinates}, the case $x=0$ follows immediately
from \eqref{eq:equiv}. If $x\neq0$, then $q(x_n)>0$ eventually, and
\[
 \left\langle f_j,\frac{x_n}{q(x_n)}\right\rangle
 \longrightarrow\left\langle f_j,\frac{x}{q(x)}\right\rangle
 \quad(j\in\N).
\]
Apply the unit-sphere case and then multiply by $q(x_n)\to q(x)$.

To prove the smoothness assertion, assume that $p$ is G\^ateaux smooth.
Fix $x\in S_q$ and an exact decomposition $x=b+\eta k$, where
$b\in B_p$ and $k\in K$. The same interior-point argument gives $p(b)=1$.
Let $g\in D_q(x)$. For every $y\in B_p$, we have $y+\eta k\in B$, and
therefore
\[
 \Rea\langle g,y\rangle
 \leqslant1-\eta\Rea\langle g,k\rangle
 =\Rea\langle g,b\rangle.
\]
Taking the supremum over $y\in B_p$ yields
$\Rea\langle g,b\rangle=\|g\|_{p^*}$.
Since $|\langle g,b\rangle|\leqslant\|g\|_{p^*}$, this also gives
$\langle g,b\rangle=\|g\|_{p^*}$ over the complex field. Hence
$g/\|g\|_{p^*}$ is the unique element $g_b$ of $D_p(b)$.
It follows that every element of $D_q(x)$ lies on the positive ray
generated by $g_b$. The condition $\|g\|_{q^*}=1$ then forces
$g=g_b/\|g_b\|_{q^*}$. Thus $D_q(x)$ is a singleton, proving the claim.

Suppose now that $p$ is Fr\'echet smooth, and retain the decomposition
$x=b+\eta k$. Define
$h_K(g)=\sup_{z\in K}\Rea\langle g,z\rangle$. Since $B=B_p+\eta K$,
the dual norms satisfy
\begin{equation}\label{eq:dual-sum}
 \|g\|_{q^*}=\|g\|_{p^*}+\eta h_K(g),\qquad
 0\leqslant h_K(g)\leqslant\|g\|_{p^*}.
\end{equation}
Let $(g_n)\subset S_{q^*}$ satisfy
$\Rea\langle g_n,x\rangle\to1$. The two terms on the right in
\[
 1-\Rea\langle g_n,x\rangle
 =\bigl(\|g_n\|_{p^*}-\Rea\langle g_n,b\rangle\bigr)
  +\eta\bigl(h_K(g_n)-\Rea\langle g_n,k\rangle\bigr)
\]
are non-negative, so the first tends to zero. Put $a_n=\|g_n\|_{p^*}$.
By \eqref{eq:dual-sum}, $(1+\eta)^{-1}\leqslant a_n\leqslant1$.
Consequently $g_n/a_n\in S_{p^*}$ and
$\Rea\langle g_n/a_n,b\rangle\to1$. Fr\'echet smoothness of $p$ and
\v Smulyan's criterion give
$\|g_n/a_n-g_b\|_{p^*}\to0$. The equivalent dual norm $q^*$ then gives
\[
 \frac1{a_n}=\|g_n/a_n\|_{q^*}\longrightarrow\|g_b\|_{q^*}>0.
\]
It follows that $g_n\to g_b/\|g_b\|_{q^*}$ in dual norm. Another
application of \v Smulyan's criterion proves Fr\'echet smoothness of $q$.
\end{proof}

\begin{lemma}\label{lem:restricted-separation}
Let $p$ be an equivalent LUR norm on a real or complex Banach space $X$,
let $K\subset B_p$ be non-empty, absolutely convex and weakly compact,
and let $\eta,\delta>0$. Let $q$ be the Minkowski functional of
$B_p+\eta K$. Suppose that $T:X\to H$ is a bounded linear operator into
a Hilbert space and that its restriction to
$\overline{\operatorname{span}}K$ is injective. Then
$N(x)=(q(x)^2+\delta^2\|Tx\|_H^2)^{1/2}$ is an equivalent wLUR norm.
\end{lemma}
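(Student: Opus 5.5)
We verify the norm axioms and equivalence first, and then prove weak local uniform rotundity. The idea is to extract from $N(x_n+x)\to2$ separate asymptotic equalities for $q$ and for $T$, push the $q$-part down to $p$ through an exact decomposition in $B_p+\eta K$, and then use injectivity of $T$ on $\overline{\operatorname{span}}K$ to identify weak limits.

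\emph{Norm and equivalence.} $N$ is a norm because $x\mapsto(q(x),\delta\|Tx\|_H)$ is a subadditive, absolutely homogeneous map into $\mathbb R^2$ with the Euclidean norm, and $q$ is a norm. Equivalence follows from $q\leqslant N\leqslant(1+\delta^2\|T\|^2C^2)^{1/2}q$, where $C$ is a constant with $p\leqslant C\|\cdot\|$, together with \eqref{eq:equiv}.

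\emph{Splitting the hypothesis.} Let $N(x_n)=N(x)=1$ and $N(x_n+x)\to2$. From the identity
\[
2N(x_n)^2+2N(x)^2-N(x_n+x)^2
=\bigl(2q(x_n)^2+2q(x)^2-q(x_n+x)^2\bigr)+\delta^2\|T(x_n-x)\|_H^2 ,
\]
both terms on the right are non-negative and tend to $0$. The first term dominates $\bigl(q(x_n)-q(x)\bigr)^2$ and also $\bigl(q(x_n)+q(x)\bigr)^2-q(x_n+x)^2$. Hence
\[
q(x_n)\to q(x),\qquad q(x_n+x)\to2q(x),\qquad Tx_n\to Tx \text{ in norm}.
\]
Since $x\neq0$, we have $q(x)>0$. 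Dividing by $q(x_n)$ and $q(x)$ (which does not affect the $T$-convergence or the desired weak convergence, after multiplying back), we may assume $q(x_n)=q(x)=1$ and $q(x_n+x)\to2$.

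\emph{Reduction to $p$.} Fix an exact decomposition $x=b+\eta k_0$ with $b\in B_p$ and $k_0\in K$. As in the proof of Lemma~\ref{lem:enlargement}, the interior-point argument gives $p(b)=1$. Write $x_n=b_n+\eta k_n$ with $b_n\in B_p$ and $k_n\in K$.

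The key estimate is: if $p(u)\leqslant1-\varepsilon$ and $v\in K$, then $q(u+\eta v)\leqslant\lambda:=1-\varepsilon/(1+\eta)$. To see this, write $u+\eta v=\lambda b'+\lambda\eta v$ with
\[
b'=\frac{u+\eta(1-\lambda)v}{\lambda}.
\]
Since $K\subset B_p$, we get $p(b')\leqslant\bigl(1-\varepsilon+\eta\varepsilon/(1+\eta)\bigr)/\lambda=1$, so $u+\eta v\in\lambda B$.

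Apply this with $u=(b_n+b)/2$ and $v=(k_n+k_0)/2\in K$. Since $q((x_n+x)/2)\to1$, it follows that $p((b_n+b)/2)\to1$. The elementary LUR consequence recorded in the preliminaries (with $p(b)=1$ and $p(b_n)\leqslant1$) then gives $p(b_n-b)\to0$. Hence
\[
x_n-x=(b_n-b)+\eta(k_n-k_0),
\]
where the first summand is norm null.

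\emph{Identifying the weak limit.} Take an arbitrary subsequence. By weak compactness and Eberlein--\v Smulian, pass to a further subsequence with $k_n\weakto k\in K$. Along it, $x_n\weakto x+\eta(k-k_0)$. Since $T$ is weak-to-weak continuous and $Tx_n\to Tx$ in norm, we obtain $\eta T(k-k_0)=0$. Now $k-k_0\in\overline{\operatorname{span}}K$, so injectivity of $T$ there forces $k=k_0$. Thus $x_n\weakto x$ along the subsequence, and the subsequence principle, exactly as in Lemma~\ref{lem:enlargement}, yields $x_n\weakto x$.

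The main obstacle is the reduction step: turning $q((x_n+x)/2)\to1$ into $p((b_n+b)/2)\to1$ without any coordinate information. The scaling estimate $q(u+\eta v)\leqslant1-\varepsilon/(1+\eta)$, which uses $K\subset B_p$, is what handles it. After that, the Hilbert-space term only needs to separate points of $K-K$, which explains why injectivity on $\overline{\operatorname{span}}K$ suffices.
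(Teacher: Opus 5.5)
Your proof is correct and follows essentially the same route as the paper: your key estimate is exactly the paper's radial estimate $q(b+\eta k)\leqslant(p(b)+\eta)/(1+\eta)$, and after that, LUR of $p$ together with injectivity of $T$ on $\overline{\operatorname{span}}K$ identifies every weak cluster point of $(k_n)$ with $k_0$, just as in the paper. The only slip is in the equivalence constant, where you need $\|\cdot\|\leqslant Cq$ (for instance $C=(1+\eta)C'$ with $\|\cdot\|\leqslant C'p$) rather than $p\leqslant C\|\cdot\|$; this is harmless.
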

\begin{proof}
The closedness argument in Lemma~\ref{lem:enlargement} uses only weak
compactness of $K$, so it applies here as well. Thus $B_p+\eta K$ is
exactly $B_q$. We first observe that
\begin{equation}\label{eq:radial-estimate}
 q(b+\eta k)\leqslant\frac{p(b)+\eta}{1+\eta}
 \qquad(b\in B_p,\ k\in K).
\end{equation}
Indeed, $(1+\eta)k\in B_q$, so $q(\eta k)\leqslant\eta/(1+\eta)$.
If $a=p(b)>0$, then $b/a+\eta k\in B_q$ and
\[
 b+\eta k=a(b/a+\eta k)+(1-a)\eta k.
\]
The triangle inequality yields \eqref{eq:radial-estimate}; when $a=0$,
the preceding estimate for $q(\eta k)$ suffices.

Suppose $N(x_n)=N(x)=1$ and $N(x_n+x)\to2$. The Hilbert-space vectors
$(q(x_n),\delta Tx_n)$ and $(q(x),\delta Tx)$ have norm one and their
sum has norm tending to two. Hence
$q(x_n)\to a=q(x)>0$ and $Tx_n\to Tx$ in $H$. In addition,
\[
 q(x_n+x)^2=N(x_n+x)^2-\delta^2\|Tx_n+Tx\|_H^2
 \longrightarrow4a^2.
\]
Set $u_n=x_n/q(x_n)$ and $u=x/a$. Then $q(u_n)=q(u)=1$,
$Tu_n\to Tu$, and $q(u_n+u)\to2$. For the last assertion, use
\[
 q\!\left(u_n+u-\frac{x_n+x}{a}\right)
 =\left|\frac1{q(x_n)}-\frac1a\right|q(x_n)\longrightarrow0.
\]

Choose exact decompositions $u_n=b_n+\eta k_n$ and $u=b+\eta k$, with
$b_n,b\in B_p$ and $k_n,k\in K$. By \eqref{eq:radial-estimate},
$q(u)=1$ forces $p(b)=1$. Applying the same estimate to the midpoint
gives
\[
 q(u_n+u)\leqslant\frac{p(b_n+b)+2\eta}{1+\eta}.
\]
Thus $p(b_n+b)\to2$, and LUR gives $p(b_n-b)\to0$. It follows that
$Tk_n\to Tk$. Every subsequence of $(k_n)$ has a further subsequence
converging weakly to some $k'\in K$. Weak continuity of $T$ gives
$Tk'=Tk$, and injectivity on $\overline{\operatorname{span}}K$ implies
$k'=k$. The subsequence principle now gives $k_n\weakto k$, hence
$u_n\weakto u$ and $x_n\weakto x$.
\end{proof}

\section{The construction and its consequences}

\begin{proof}[Proof of Theorem~\ref{thm:approximation}]
Fix $\eta,\delta>0$ and a norming sequence $(f_j)$ for the prescribed
LUR norm $p$, as in \eqref{eq:norming}. Choose $e\in S_p$ and
$f\in X^*$ such that $\|f\|_{p^*}=\langle f,e\rangle=1$.

There is a weakly null sequence $(w_n)$ with $p(w_n)=1$ for all $n$.
Indeed, start with a weakly null sequence which is not norm null,
pass to a subsequence whose norms are bounded away from zero, and
normalise. Put
\[
 v_n=w_n-\langle f,w_n\rangle e.
\]
Then $v_n\in\ker f$, $v_n\weakto0$, and $p(v_n)\to1$.
After discarding finitely many terms, set $z_n=v_n/p(v_n)$. Thus
\begin{equation}\label{eq:zn}
 z_n\weakto0,\qquad p(z_n)=1,\qquad\langle f,z_n\rangle=0.
\end{equation}

Let $K=\overline{\aconv}\{z_n:n\in\N\}$, and let $q$ be the
Minkowski functional of $B_p+\eta K$. Lemmas~\ref{lem:compact} and
\ref{lem:enlargement} apply. Define
\begin{equation}\label{eq:T}
 Tx=(2^{-j/2}\langle f_j,x\rangle)_{j\in\N}\in\ell_2(\F).
\end{equation}
This operator is injective by \eqref{eq:norming}. If $T_m$ retains only
its first $m$ coordinates, then
\begin{equation}\label{eq:T-tail}
 \|T-T_m\|_{(X,p)\to\ell_2}\leqslant2^{-m/2}.
\end{equation}
Thus $T$ is compact, and $\|T\|\leqslant1$. Define
\begin{equation}\label{eq:N}
 N(x)=\bigl(q(x)^2+\delta^2\|Tx\|_2^2\bigr)^{1/2}.
\end{equation}
This is a norm, by the triangle inequality in the Hilbert sum, and
\begin{equation}\label{eq:N-equivalence}
 \frac{p(x)}{1+\eta}\leqslant N(x)
 \leqslant\sqrt{1+\delta^2}\,p(x).
\end{equation}
If $p$ is G\^ateaux smooth, respectively Fr\'echet smooth,
Lemma~\ref{lem:enlargement} gives the same property for $q$.
Both $q^2$ and $\|T\,\cdot\|_2^2$ then have the corresponding
differentiability everywhere, including at zero. Taking the square root
shows that $N$ has the claimed smoothness away from zero.

We show that $N$ is weakly LUR. Suppose
\[
 N(x_n)=N(x)=1,\qquad N(x_n+x)\longrightarrow2.
\]
In the Hilbert space $\F\oplus_2\ell_2(\F)$, put
\[
 h_n=(q(x_n),\delta Tx_n),\qquad h=(q(x),\delta Tx).
\]
Both vectors have norm one, and the triangle inequality for $q$ gives
\[
 N(x_n+x)\leqslant\|h_n+h\|_2\leqslant2.
\]
The parallelogram identity gives
\[
 \|h_n-h\|_2^2=4-\|h_n+h\|_2^2\longrightarrow0.
\]
Consequently $q(x_n)\to q(x)$ and
$\langle f_j,x_n\rangle\to\langle f_j,x\rangle$ for every $j$.
Lemma~\ref{lem:enlargement} now gives $x_n\weakto x$.

It remains to prove that $N$ is not MLUR. Since $K\subset\ker f$,
we have $|\langle f,y\rangle|\leqslant1$ for every $y\in B_p+\eta K$; hence
\begin{equation}\label{eq:f-bound}
 |\langle f,y\rangle|\leqslant q(y)\qquad(y\in X).
\end{equation}
Since $K$ is balanced, $e,e\pm\eta z_n\in B_p+\eta K$.
Together with \eqref{eq:f-bound}, this gives the exact equalities
\begin{equation}\label{eq:flat}
 q(e)=q(e+\eta z_n)=q(e-\eta z_n)=1.
\end{equation}
Compactness of $T$ and weak nullity of $(z_n)$ give $\|Tz_n\|_2\to0$.
Writing $c=N(e)>0$, we obtain
\begin{equation}\label{eq:witness}
 N(e\pm\eta z_n)\longrightarrow c,\qquad
 N(z_n)\geqslant\frac1{1+\eta}.
\end{equation}

For explicit witnesses on the unit sphere, let
\[
 d_n^\pm=N(e\pm\eta z_n),\qquad
 u_n^\pm=\frac{e\pm\eta z_n}{d_n^\pm},\qquad u=\frac ec.
\]
Then $N(u_n^\pm)=N(u)=1$. Moreover, the errors
\[
 E_n^\pm=N\!\left(u_n^\pm-\frac{e\pm\eta z_n}{c}\right)
 =\left|\frac1{d_n^\pm}-\frac1c\right|d_n^\pm
 \longrightarrow0
\]
give
\[
 N\!\left(\frac{u_n^++u_n^-}{2}-u\right)
 \leqslant\frac{E_n^++E_n^-}{2}\longrightarrow0.
\]
On the other hand,
\[
 N(u_n^+-u_n^-)
 \geqslant\frac{2\eta}{c}N(z_n)-E_n^+-E_n^-,
\]
and hence
\[
 \liminf_n N(u_n^+-u_n^-)
 \geqslant\frac{2\eta}{c(1+\eta)}>0.
\]
Thus $N$ is not MLUR. Taking $N_{\eta,\delta}=N$ proves the theorem.
\end{proof}

\begin{proof}[Proof of Theorem~\ref{thm:main}]
If $X$ fails the Schur property, choose an equivalent norm $p$ which is
both LUR and G\^ateaux smooth, using the simultaneous renorming theorem
and, in the complex case, Lemma~\ref{lem:rotation}.
Theorem~\ref{thm:approximation}, with $\eta=\delta=1$, proves (3).
Since LUR implies MLUR, (3) implies (2). Finally, on a space with the
Schur property, the weak convergence supplied by a weakly LUR norm is
norm convergence, so that norm is LUR. This proves (2)$\Rightarrow$(1).
\end{proof}

\begin{corollary}\label{cor:density}
Let $X$ be a separable real or complex Banach space without the Schur
property. For every equivalent norm $r$ and every $\tau>0$, there is an
equivalent wLUR norm $N$ which is not MLUR and satisfies
\[
 \frac{r(x)}{1+\tau}\leqslant N(x)\leqslant(1+\tau)r(x)
 \qquad(x\in X).
\]
In particular, such norms are dense among all equivalent norms for
uniform convergence on bounded sets.
\end{corollary}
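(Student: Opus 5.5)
The plan is to first approximate the given norm $r$ by an equivalent LUR norm $p$, and then apply Theorem~\ref{thm:approximation} to $p$ with small parameters. Since the two approximation factors multiply, it is enough to choose $\tau'>0$ with $(1+\tau')^2\leqslant1+\tau$ and to arrange each step within a factor $1+\tau'$.

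\textbf{Step 1.} We need an LUR norm $p$ with
\[
 \frac{r(x)}{1+\tau'}\leqslant p(x)\leqslant(1+\tau')r(x).
\]
Take any equivalent LUR norm $s$ on $X$. In the real case it comes from Kadec's theorem; in the complex case, apply Lemma~\ref{lem:rotation} to a real LUR norm. Define
\[
 p(x)^2=r(x)^2+\varepsilon^2 s(x)^2 .
\]
This is a norm, since it is the $\ell_2$-sum of two norms.

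To see that $p$ is LUR, use the standard argument. Suppose $p(x_n)=p(x)=1$ and $p(x_n+x)\to2$. Convexity of the squared norms gives
\[
 2r(x_n)^2+2r(x)^2-r(x_n+x)^2\to0
\]
and the analogous limit for $s$. From the $s$-limit, $s(x_n)\to s(x)$ and $s(x_n+x)\to2s(x)$. Here $s(x)>0$ because $x\neq0$. Normalising and applying LUR of $s$ gives $x_n\to x$.

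The comparison is immediate:
\[
 r\leqslant p\leqslant(1+\varepsilon^2C^2)^{1/2}r,
\]
where $s\leqslant Cr$. So a small $\varepsilon$ gives the required bounds.

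\textbf{Step 2.} Apply Theorem~\ref{thm:approximation} to $p$ with $\eta=\tau'$ and $\delta$ chosen so that $\sqrt{1+\delta^2}\leqslant1+\tau'$. This yields a wLUR, non-MLUR norm $N$ with
\[
 \frac{p}{1+\tau'}\leqslant N\leqslant(1+\tau')p .
\]
Combined with Step 1, this gives
\[
 \frac{r}{(1+\tau')^2}\leqslant N\leqslant(1+\tau')^2r ,
\]
and hence the stated bounds with $1+\tau$.

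\textbf{Density.} On $B_r$ we get $|N(x)-r(x)|\leqslant\tau\, r(x)\leqslant\tau$. So $N\to r$ uniformly on $r$-bounded sets as $\tau\to0$, and bounded sets coincide for equivalent norms.

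The only step with real content is the LUR property of the perturbation in Step 1, and that is classical. One point needs care: the limit for $r$ is not needed. It is enough that the $s$-part of the sum of nonnegative defects tends to zero.
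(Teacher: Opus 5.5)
Your proposal is correct and follows essentially the same route as the paper: you perturb $r$ to the LUR norm $(r^2+\varepsilon^2 s^2)^{1/2}$, check LUR through the vanishing nonnegative convexity defect of the $s$-part, and then apply Theorem~\ref{thm:approximation}. The only difference is bookkeeping. The paper rescales $s\leqslant r$ and takes $\varepsilon^2=\eta=\delta^2=\tau$, so that $r\leqslant p_\tau$ gives the lower bound directly and $\sqrt{1+\tau}\cdot\sqrt{1+\tau}=1+\tau$ gives the upper bound, whereas you split the error as $(1+\tau')^2\leqslant1+\tau$.
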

\begin{proof}
Choose an equivalent LUR norm $s$ and rescale it so that $s\leqslant r$.
Put $p_\tau(x)=(r(x)^2+\tau s(x)^2)^{1/2}$. Then
$r\leqslant p_\tau\leqslant\sqrt{1+\tau}\,r$.
We first check that $p_\tau$ is LUR. If
$p_\tau(x_n)=p_\tau(x)=1$ and $p_\tau(x_n+x)\to2$, the Hilbert vectors
$a_n=(r(x_n),\sqrt\tau s(x_n))$ and
$a=(r(x),\sqrt\tau s(x))$ have norm one, and
$p_\tau(x_n+x)\leqslant\|a_n+a\|_2\leqslant2$. Hence $a_n\to a$.
In particular, $s(x_n)\to s(x)>0$. Moreover,
\[
 \begin{split}
 0&\leqslant\tau\bigl((s(x_n)+s(x))^2-s(x_n+x)^2\bigr)\\
  &\leqslant\|a_n+a\|_2^2-p_\tau(x_n+x)^2\longrightarrow0.
 \end{split}
\]
Thus $s(x_n+x)\to2s(x)$. After normalising in $s$, local uniform
rotundity gives norm convergence of $x_n$ to $x$.

Apply Theorem~\ref{thm:approximation} to $p_\tau$ with $\eta=\tau$
and $\delta=\sqrt\tau$. The resulting norm satisfies
$r/(1+\tau)\leqslant N\leqslant(1+\tau)r$, as required.
In particular, $\sup_{x\in B_r}|N(x)-r(x)|\leqslant\tau$.
No smoothness assertion is made here for an arbitrary starting norm $r$.
\end{proof}

\begin{proof}[Proof of Theorem~\ref{thm:general}]
Suppose that $X$ has an equivalent wLUR norm which is not MLUR.
By \cite[Main Theorem]{MOTV}, its underlying real space admits an
equivalent LUR norm. In the complex case,
Lemma~\ref{lem:rotation} makes this a complex LUR norm.
If $X$ had the Schur property, its given wLUR norm would be LUR and
hence MLUR. Thus (1) implies (2).

Conversely, fix an equivalent LUR norm $p$ on $X$ and suppose that $X$
fails the Schur property. As in the proof of
Theorem~\ref{thm:approximation}, choose $e\in S_p$,
$f\in S_{p^*}$ with $\langle f,e\rangle=1$, and a normalised weakly
null sequence $(z_n)\subset\ker f$. Set
$K=\overline{\aconv}\{z_n:n\in\N\}$ and
$Y=\overline{\operatorname{span}}\{z_n:n\in\N\}$.
The set $K\subset B_p$ is weakly compact by Lemma~\ref{lem:compact},
and $Y$ is separable.

Choose a countable norming family $(g_j)$ for $(Y,p)$ as in
\eqref{eq:norming}, and extend it by the Hahn--Banach theorem to
$f_j\in X^*$ with $\|f_j\|_{p^*}=1$. The operator
$Tx=(2^{-j/2}\langle f_j,x\rangle)_{j\in\N}$ is compact by
\eqref{eq:T-tail}, and $T|_Y$ is injective. Let $q$ be the Minkowski
functional of $B_p+K$ and put
$N(x)=(q(x)^2+\|Tx\|_2^2)^{1/2}$.
Lemma~\ref{lem:restricted-separation} shows that $N$ is wLUR.

The remaining estimates do not require the family $(f_j)$ to norm all
of $X$. In fact, $K\subset\ker f$ gives
$q(e)=q(e\pm z_n)=1$, and compactness gives $Tz_n\to0$.
Hence, with $c=N(e)$,
$N(e\pm z_n)\to c$ and $N(z_n)\geqslant1/2$.
The normalised vectors $u_n^\pm=(e\pm z_n)/N(e\pm z_n)$ therefore
satisfy, by the same normalisation estimates used in the proof of
Theorem~\ref{thm:approximation},
\[
 N\!\left(\frac{u_n^++u_n^-}{2}-\frac ec\right)\longrightarrow0,
 \qquad \liminf_nN(u_n^+-u_n^-)\geqslant\frac1c>0.
\]
Thus $N$ is not MLUR, proving (2)$\Rightarrow$(1).
\end{proof}

\begin{remark}
The shrinking assumption in the coordinate construction of the first-named
author \cite{Draga} cannot simply be replaced by totality of the
coordinate functionals. To see this, apply that construction to the
canonical Markushevich basis of $\ell_1$. The resulting norm is
\[
 r(x)=\max\{\tfrac12\|x\|_1,\|x\|_\infty\},\qquad
 M(x)^2=r(x)^2+\sum_{j=1}^{\infty}4^{-j}|x_j|^2.
\]
For $n\geqslant2$,
\[
 M(e_1+e_n)\longrightarrow\frac{\sqrt5}{2}=M(e_1),\qquad
 M(2e_1+e_n)\longrightarrow\sqrt5,
\]
whereas $\langle\mathbf1,e_n\rangle=1$ for
$\mathbf1=(1,1,\ldots)\in\ell_\infty=\ell_1^*$.
Put $c=\sqrt5/2$, $y_n=(e_1+e_n)/M(e_1+e_n)$, and $y=e_1/c$.
Then $M(y_n)=M(y)=1$ and $M(y_n+y)\to2$, while
\[
 \langle\mathbf1,y_n\rangle\longrightarrow\frac2c
 \neq\frac1c=\langle\mathbf1,y\rangle.
\]
Hence $M$ is not weakly LUR.
\end{remark}

\end{document}